\documentclass{amsart}
\usepackage{graphicx} 
\usepackage[left=1.15in,
            right=1.15in,
            top=1.15in,
            bottom=1.15in]{geometry}
\usepackage{setspace}
\usepackage{indentfirst}
\usepackage[utf8]{inputenc}
\usepackage{amsfonts}
\usepackage{amssymb}
\usepackage{amsmath, graphicx}
\usepackage{color}
\usepackage{amsthm}
\usepackage{amscd}
\usepackage{mathtools}
\usepackage{tikz}
\usetikzlibrary{positioning, fit, backgrounds, circuits.logic.US}
\usepackage{subcaption}
\usepackage[hidelinks]{hyperref}
\usepackage{cleveref}
\usepackage{lipsum}
\usepackage{algorithm}
\usetikzlibrary{cd}
\usepackage{faktor}
\usepackage{epigraph}
\usepackage{stmaryrd}
\usepackage{makecell}
\usepackage{blkarray}
\usepackage{ytableau} 
\usepackage[normalem]{ulem}
 
\newcommand{\N}{{\mathbb N}} 
 
\newcommand{\M}{{\mathcal M}}

\newcommand{\T}{{\mathcal T}}

\definecolor{choc}{RGB}{132,86,60}

\newtheorem{theorem}{Theorem}[section]
\newtheorem{lemma}[theorem]{Lemma}
\newtheorem{corollary}[theorem]{Corollary}
\newtheorem{proposition}[theorem]{Proposition}

\theoremstyle{definition}
\newtheorem{definition}[theorem]{Definition}

\newlength\tindent
\setlength{\tindent}{\parindent}

\usepackage[hidelinks]{hyperref}

\title{The Ungar Games on Graded Posets}

\author{Jacob Paltrowitz}
\address{Harvard University}
\email{jacobpaltrowitz@college.harvard.edu}

\begin{document}

\begin{abstract}
    For a poset $P$, an Ungar move sends $P$ to $P\setminus T$, where $T$ is some subset of maximal elements of $P$. With these Ungar moves, Defant, Kravitz, and Williams define the Ungar games, where two players alternate making nontrivial Ungar moves until one player cannot make a move and loses. We characterize the second-player wins on graded posets. We first prove recursive characterizations of second-player wins before using these results to give classifications of the second-player wins in terms of boolean circuits. We also generalize Defant, Kravitz, and Williams' work on Young's Lattice $J(\N^2)$ to the higher-dimensional $J(\N^d)$. 
\end{abstract}

\maketitle

\section{Introduction}

In the game Nibble, defined in 2024 by Defant, Kravitz, and Williams \cite{defant2024ungargames}, players start with a rectangular chocolate bar and alternate nibbling off exposed top-right corners. The first player who cannot make a move loses. Two example nibbles are shown in \Cref{fig:nibbles}. In order to generalize Nibble, it is useful to think of the chocolate bar as a poset where each square of the chocolate bar is an element and a square is greater than the squares that lie weakly southwest of it. In this poset, the exposed top-right corners correspond to maximal elements. 

\begin{figure}
    \centering

    \scalebox{1.2}{
    \begin{tikzpicture}[baseline]
    \node at (-4.5, 0) {\ydiagram[*(choc)]{4,4,4,4}};
    \node at (-1,0) {\ydiagram[*(choc)]{3,4,4,4}};
    \node at (2.5,0) {\ydiagram[*(choc)]{2,3,4,4}};
    \node at (6, 0) {\ydiagram[*(choc)]{1, 2, 4, 4}};
    \node at (-2.75, 0) {$\rightarrow$};
    \node at (.75, 0) {$\rightarrow$};
    \node at (4.25, 0) {$\rightarrow$};
    \end{tikzpicture}}
    
    \caption{An example of three nibbles starting from a $4\times 4$ chocolate bar}
    \label{fig:nibbles}
\end{figure}
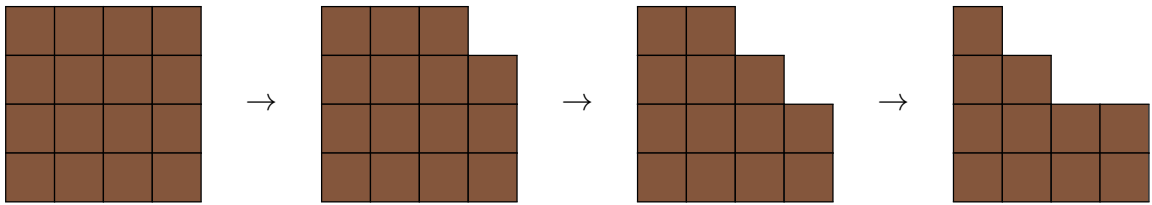

In 1982, Ungar \cite{UNGAR} solved a problem originally posed by Scott \cite{scottpoints} asking for the minimum number of possible slopes determined by a collection of non collinear points. In order to solve this problem, Ungar introduced a move between permutations which reverses consecutive decreasing subsequences. In 2023, Defant and Li \cite[Def. 1.3]{ungarianmarkov} generalized Ungar's construction on permutations to any meet-semilattice. Given some meet-semilattice $L$ and some element $x\in L$, let $\textrm{cov}_L(x)$ denote the set of elements covered by $x$.  An \textit{Ungar move} sends $x$ to the meet of some subset of elements of $\textrm{cov}_L(x)$. The moves originally constructed by Ungar correspond to the case where $L$ is the set of permutations of $n$ elements under the weak order. 

In \cite{defant2024ungargames}, Defant, Kravitz, and Williams specialize Ungar moves to  distributive lattices. Birkhoff's Representation Theorem \cite[Thm. 5]{Birkhoff1937} states that any finite distributive lattice $L$ is isomorphic to the poset $J(P)$ of order ideals of some poset $P$ ordered by containment. On a distributive lattice $J(P)$, the meet operation is equivalent to intersection of order ideals. Moreover, for $I\in J(P)$, we have that $\textrm{cov}_{J(P)}(I) = \{I\setminus\{x\}: x \in\max( I)\}$, where $\max(I)$ denotes the set of maximal elements of $I$. Thus, Ungar moves on distributive lattices correspond to removing maximal elements of a given order ideal, coinciding exactly with the moves in the game Nibble. 

\begin{definition}\cite[p.7]{defant2024ungargames}
    Let $J(P)$ be the distributive lattice formed from the finite order ideals of some poset $P$. Then for $X\in J(P)$ and $T\subseteq\max(X)$, an \textit{Ungar move} sends $X$ to $X\setminus T$. We call the elements of $T$ the \textit{ributes} of the Ungar move. If a move has no ributes, we call the move \textit{trivial}. 
\end{definition}

For the remainder of the paper, we will only consider this distributive lattice setting for Ungar moves. For studies of Ungar moves on other families of meet-semilattices, see \cite{defant2024ungargames} and \cite{ungarianmarkov}. 

Using Ungar moves, we can now generalize Nibble to the Ungar games. The \textit{Ungar games} consist of two players who alternate making nontrivial Ungar moves. Canonically the players are named Atniss and Eeta and Atniss makes the first move. The first player who cannot make a nontrivial move loses. Since this game has two players and perfect information, a given position is either a first or second player win under perfect strategy. We say a given poset $P$ is an \textit{Atniss win} if the first player has a winning strategy in the Ungar game starting at $P\in J(P)$ and an \textit{Eeta win} otherwise. 

In this setting, \cite[Thm. 1.5]{defant2024ungargames} classifies the Eeta wins in Nibble, corresponding to the Ungar games on elements of $J(\N^2)$. Given some partially nibbled chocolate bar $\lambda$, we can associate a string of $L$s and $D$s corresponding to the path from the top left corner of the bar to the bottom right. For example, the rightmost bar in \Cref{fig:nibbles} is associated with the sequence $LLDLDLDD$.  Defant, Kravitz, and Williams show that a chocolate bar $\lambda$ is an Eeta win if and only if the associated string of $L$s and $D$s never contains an odd length block of $L$s followed by an odd length block of $D$s. 

In this paper, we generalize the results of \cite{defant2024ungargames} to provide a full classification of Eeta wins for distributive lattices of graded posets. In particular, our results answer the open question of classifying the Eeta wins in $J(\N^d)$. Our work also specializes to more succinct and intuitive proofs of the results in \cite{defant2024ungargames}. In order to state the results, we first introduce some definitions. 

\begin{definition}
    Let $P$ be a poset. For any maximal element $m\in P$, the \textit{maximal subposet} at $m$, denoted $\M_P(m)$, is the subposet of $P$ comprised of all elements less than or equal to $m$ but no  other maximal element. An example is shown in \Cref{fig:maxposet}.
\end{definition}

One of our main results characterizes the relationship between posets and their maximal subposets in the Ungar games. 

\begin{theorem}
\label{bigthm}
    Let $P$ be a finite graded poset. Then $P$ is an Eeta win if and only if $\M_P(m)$ is an Eeta win for every maximal element $m\in P$.
\end{theorem}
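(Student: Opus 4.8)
The plan is to recast the statement in the language of normal‑play combinatorial game theory and to reduce the game on $P$ to a disjoint union of the games on its maximal subposets. Call a position a \emph{$\mathcal P$-position} if it is an Eeta win (the player to move loses) and an \emph{$\mathcal N$-position} otherwise, so that the empty poset is a $\mathcal P$-position and a position is a $\mathcal P$-position exactly when every nontrivial Ungar move leaves an $\mathcal N$-position. The starting observation is that a finite poset decomposes as $P = \left(\bigsqcup_{m\in\max(P)}\M_P(m)\right)\sqcup R$, where $R$ is the set of elements lying below at least two maximal elements, and that the maximal elements of $P$ are precisely the (unique) maximal elements of the blocks $\M_P(m)$. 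Thus an Ungar move on $P$ acts, in its first step, by deleting tops of the blocks $\M_P(m)$, and the whole theorem amounts to the assertion that the ``glue'' $R$ does not affect who wins.

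First I would isolate the case $R=\varnothing$, i.e. when $P$ is the disjoint union $\bigsqcup_i Q_i$ of the posets $Q_i=\M_P(m_i)$. Here an Ungar move is exactly a move in the \emph{selective compound} of the games on the $Q_i$: a player must make a nontrivial move in at least one block and may move in several blocks at once. I would prove the clean lemma that such a disjoint union is an Eeta win if and only if every block $Q_i$ is an Eeta win, by a restoring strategy. If every $Q_i$ is a $\mathcal P$-position, then any move spoils a nonempty set of blocks into $\mathcal N$-positions, and the responder moves simultaneously in exactly those spoiled blocks, returning each to a $\mathcal P$-position; iterating, the responder makes the last move. Conversely, if some $Q_i$ is an $\mathcal N$-position, the player to move repairs every $\mathcal N$-block to a $\mathcal P$-block in one nonempty move, handing the opponent an all-$\mathcal P$ position, which is a loss by the first part. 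This settles the theorem whenever $R=\varnothing$.

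The main work, and the main obstacle, is to drop the hypothesis $R=\varnothing$: I would show by strong induction on $|P|$ that the game on $P$ has the same winner as the game on $\bigsqcup_i\M_P(m_i)$. The difficulty is that the maximal-subposet decomposition does \emph{not} commute with Ungar moves: deleting a set $T$ of maximal elements can expose an element of $R$, which then belongs to no single maximal subposet and can cause two blocks to merge, so the game trees of $P$ and of $\bigsqcup_i\M_P(m_i)$ cannot be matched move-for-move. This is exactly where gradedness enters. Since $P$ is graded, $\max(P)$ is the entire top rank, and an element $x\in R$ can become maximal only after its whole principal up-set has been deleted; as that up-set meets at least two of the maximal subposets, no shared element is exposed until the relevant blocks have already been consumed. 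Feeding this into the restoring strategy of the previous paragraph, and applying the inductive hypothesis to the strictly smaller positions reached after each move, I would argue that an element of $R$ never supplies either player a tempo that changes the outcome, so the winner is unchanged. Combining this reduction with the selective-compound lemma then yields the theorem.

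Finally, I would record the two degenerate cases in which the induction bottoms out: if $P$ is an antichain then each $\M_P(m)=\{m\}$ is a single-point Atniss win and $P$ is an Atniss win, while if $P$ has a unique maximal element $m$ then $\M_P(m)=P$ and the statement is a tautology. I expect the crux to be making the informal phrase ``$R$ supplies no tempo'' precise; the cleanest route is likely to prove the stronger statement that for \emph{every} finite poset the game agrees with the selective compound of its maximal subposets, with gradedness used to keep both the recursion and the exposed elements under control.
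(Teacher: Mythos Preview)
Your selective–compound lemma for the disjoint case is correct and is exactly the paper's Lemma~\ref{unionwins}. The gap is entirely in the third paragraph, where you try to pass from $\bigsqcup_m \M_P(m)$ to $P$.

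First, the sentence ``since $P$ is graded, $\max(P)$ is the entire top rank'' is false under the definition used here: a rank function only has to be compatible with covers, so maximal elements of a graded poset may sit at different ranks. The paper's proof explicitly works with maximal elements of varying rank (it repeatedly singles out one of \emph{minimal} rank), so any argument that begins by assuming a single top rank is already off track.

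Second, and more seriously, the restoring strategy does not survive the passage from the disjoint union to $P$. In the disjoint game an Ungar move on $P$ acts independently on the blocks, but in $P$ itself removing a set $T\subseteq\max(P)$ produces new maximal elements $c$ of $P\setminus T$ whose maximal subposets $\M_{P\setminus T}(c)$ are not a priori pieces of any old $\M_P(m)$. Your proposal never identifies these new subposets with anything the induction can speak about; the phrase ``$R$ supplies no tempo'' is exactly the missing lemma, not a step one can wave through. Concretely, the paper earns each direction by a rank argument: for the forward direction it picks $t\in T$ of minimal rank, finds $c\lessdot t$ inside $\M_P(t)$ with $\M_{\M_P(t)\setminus\{t\}}(c)$ an Atniss win, and then proves the equality
\[
\M_{P\setminus T}(c)\;=\;\M_{\M_P(t)\setminus\{t\}}(c),
\]
using gradedness to rule out interference from other maximal elements. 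For the reverse direction it builds $T$ greedily by always deleting a minimal-rank maximal element with an Atniss subposet, and uses the rank monotonicity of this process both to certify that $T\subseteq\max(P)$ (so the move is legal) and to show that every newly exposed maximal element again has an Eeta maximal subposet. Nothing in your outline supplies either of these identifications, and without them the induction has nothing to bite on. Your closing suggestion to prove the statement for \emph{all} finite posets is therefore headed in the wrong direction: gradedness is not cosmetic here, it is precisely what lets one control how $\M_{P\setminus T}(\,\cdot\,)$ relates to $\M_P(\,\cdot\,)$ via rank inequalities.
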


Iterated application of this result allows for a stronger theorem, which requires the following definitions. 

\begin{definition}
    Let $P$ be a poset. The \textit{skeleton} of $P$, denoted $\T_P$, is the subposet of $P$ consisting of all elements that are not less than two incomparable elements of $P$. For some maximal element $m\in P$, the \textit{components} of the skeleton are $\T_P(m) = \T_P\cap \M_P(m)$. 
\end{definition}

\begin{definition}
    For some poset $P$ and element $x\in P$, let $G_P(x) = \{y\in P:y>x\}$. 
\end{definition}

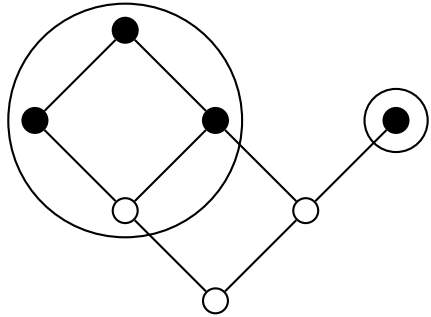
\begin{figure}[h!]
    \centering
\begin{tikzpicture}[scale=1.2,
    box/.style={draw, fill=white, minimum size=8mm, inner sep=0pt},
    every node/.style={circle, draw},
    >={Stealth[length=2pt]},
    thick]


\node[fill](a) at (0,0) {};

\node[fill] (b) at (1,-1) {};
\node[fill] (c) at (-1,-1) {};

\node (d) at (2,-2) {};
\node (e) at (0,-2) {};

\node[fill] (f) at (3,-1) {};
\node (g) at (1,-3) {};

\draw[-] (b) -- (a);
\draw[-] (c) -- (a);

\draw[-] (d) -- (b);
\draw[-] (e) -- (b);
\draw[-] (e) -- (c);

\draw[-] (f) -- (d);
\draw[-] (g) -- (d);
\draw[-] (g) -- (e);

\begin{scope}[on background layer]
  \node[thick, fit=(a)(b)(c)(e), inner sep=-8pt] {};
  \node[thick, fit=(f)] {};
\end{scope}

\end{tikzpicture}
    \caption{A poset whose maximal subposets are circled and whose skeleton is filled in with black.}
    \label{fig:maxposet}
\end{figure}

The following theorem utilizes \Cref{bigthm} to reduce the problem of determining winners of the Ungar games to considering only skeletons. 

\begin{theorem}
\label{subtreethm}
    A finite graded poset $P$ is an Eeta win if and only if its skeleton $\T_P$ is an Eeta win. 
\end{theorem}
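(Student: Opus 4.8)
The plan is to argue by strong induction on $|P|$, using \Cref{bigthm} to peel off one maximal element at a time and thereby reduce every question about $P$ to questions about the components $\T_P(m)$ of its skeleton.

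First I would record two structural facts. If $x \in \M_P(m)$ and $y > x$ in $P$, then $y$ lies below some maximal element, which must be $m$ (otherwise $x$ would too), and $y$ lies below no other maximal element; hence $\M_P(m)$ is an up-set of $P$ and $G_{\M_P(m)}(x) = G_P(x)$ for every $x \in \M_P(m)$. Consequently $x$ has a chain as its up-set in $\M_P(m)$ exactly when it does in $P$, so $\T_{\M_P(m)} = \T_P(m)$. Being an up-set of a graded poset, $\M_P(m)$ is itself graded, so both $\M_P(m)$ and $\T_P(m)$ are legitimate inputs for the induction. Second, since every $x \in \T_P$ has a chain up-set it lies below a unique maximal element of $P$, whence $\T_P$ is a disjoint union $\bigsqcup_m \T_P(m)$ of pairwise incomparable components, each with unique maximal element $m$.

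Next I would prove a game lemma for disjoint unions: if $Q = \bigsqcup_i Q_i$ has no relations between distinct blocks, then $Q$ is an Eeta win iff every $Q_i$ is. The subtlety is that a single Ungar move deletes a nonempty set of maximal elements which may be spread across several blocks, so this is a \emph{selective} sum rather than a disjunctive one. The lemma follows by induction on $|Q|$: from a position in which all blocks are Eeta wins, any move turns at least one block into an Atniss win and so cannot land on another all-Eeta position, while from a position having some Atniss block one may simultaneously move every Atniss block to an Eeta win (leaving the Eeta blocks untouched), reaching an all-Eeta position. Applied to $\T_P = \bigsqcup_m \T_P(m)$, this shows $\T_P$ is an Eeta win iff every $\T_P(m)$ is.

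The induction then splits on the number of maximal elements. If $P$ has at least two, each $\M_P(m)$ is a proper graded subposet, so the induction hypothesis gives that $\M_P(m)$ is an Eeta win iff $\T_{\M_P(m)} = \T_P(m)$ is; combining \Cref{bigthm} with the disjoint-union lemma yields that $P$ is an Eeta win iff every $\T_P(m)$ is, iff $\T_P$ is. The remaining, and I expect hardest, case is a unique maximal element $\hat{1}$, where \Cref{bigthm} is vacuous. The key observation is that the only legal move from such a $P$ deletes $\hat{1}$, so $P$ is an Eeta win iff $P \setminus \{\hat{1}\}$ is an Atniss win; the same forced move governs $\T_P$, whose unique maximal element is again $\hat{1}$, and since $\hat{1}$ sits above everything one checks $\T_{P \setminus \{\hat{1}\}} = \T_P \setminus \{\hat{1}\}$. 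As $P \setminus \{\hat{1}\}$ is a smaller graded poset (an order ideal of $P$), the induction hypothesis converts ``$P \setminus \{\hat{1}\}$ is an Atniss win iff $\T_P \setminus \{\hat{1}\}$ is'' into the desired equivalence. The main obstacles are isolating this forced-move reduction for the unique-maximum case and confirming that the intermediate posets $\M_P(m)$ and $P \setminus \{\hat{1}\}$ stay graded so that \Cref{bigthm} and the induction hypothesis genuinely apply.
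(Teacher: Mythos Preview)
Your proposal is correct and follows essentially the same route as the paper: induction on $|P|$, splitting on whether $P$ has one or several maximal elements, invoking \Cref{bigthm} in the multi-maximal case and the forced single-ribute move in the unique-maximal case, together with the identities $\T_{\M_P(m)}=\T_P(m)$, $\T_P=\bigsqcup_m\T_P(m)$, and $\T_{P\setminus\{\hat 1\}}=\T_P\setminus\{\hat 1\}$. The only cosmetic difference is that the paper packages these identities as separate lemmas and derives the disjoint-union game lemma from \Cref{bigthm} rather than via your direct selective-sum argument.
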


It turns out that the skeleton has nice graph-theoretic structure. A \textit{rooted tree poset} is a poset with a single maximal element where the Hasse diagram is graph-theoretically a tree. Each skeleton consists of the disjoint union of rooted tree posets. Building upon \Cref{subtreethm}, it remains to classify which rooted tree posets are Eeta wins. This classification is done in the following theorem using the concept of boolean NAND formulas from theoretical computer science. See Section 2 for definitions and an example of NAND formulas. 

\begin{theorem}\label{treenand}
    Let $P$ be a finite rooted tree poset. Then the winner of the Ungar games on $P$ is equivalent to the output of the boolean \textrm{NAND} formula on $P$ on inputs of all $0$. 
\end{theorem}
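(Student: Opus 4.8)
The plan is to recast the statement in the standard language of combinatorial game theory and then induct on the tree. Recall that $P$ is an Eeta win exactly when the first player to move loses, i.e.\ when $P$ is a \emph{P-position}; write $f(Q)\in\{P,N\}$ for the status of the Ungar game played on an arbitrary finite poset $Q$, noting that every position reached during play is itself an order ideal (hence a poset) and that a move simply deletes a nonempty subset of the current maximal elements. Under this dictionary a poset is an Atniss win iff it is an N-position. Since in a rooted tree poset the set $G_P(x)=\{y:y>x\}$ of elements above any $x$ is a chain, no element lies below two incomparable elements, so $\T_P=P$ and \Cref{subtreethm} (likewise \Cref{bigthm}, as $r$ is the unique maximal element) gives no further reduction; a direct recursion is therefore needed. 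Fixing the convention that NAND outputs are read so that a single leaf has value $0$, the theorem becomes: the NAND value at the root equals $1$ iff $P$ is a P-position.

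The crux, and the step I expect to be the main obstacle, is the behavior of the game on a disjoint union. This is \emph{not} an ordinary disjunctive sum: one Ungar move may delete maximal elements from several components at once, so the Sprague--Grundy calculus does not apply. I will instead prove directly, by induction on the total number of elements, that $f\bigl(\bigsqcup_i Q_i\bigr)=P$ if and only if $f(Q_i)=P$ for every $i$. If some $Q_j$ is an N-position, I reach an all-P position in a single stroke: in each N-component delete a subset of maximal elements witnessing a move to a P-child, and in each P-component delete nothing; this is one legal (nonempty) move that strictly lowers the total size and lands in a union all of whose components are P-positions, which is a P-position by the inductive hypothesis. Conversely, if every $Q_i$ is a P-position, any legal move deletes a nonempty $T_j$ from some component $Q_j$; since $Q_j$ is a P-position this sends it to an N-position, so the resulting union has an N-component and is an N-position by induction. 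Thus every move out of an all-P union reaches an N-position, making the union a P-position.

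Granting this lemma, the recursion for rooted tree posets is immediate. Let $r$ be the root of $P$ and let $P_1,\dots,P_k$ be the subtrees hanging below $r$. Because $\{r\}$ is the unique maximal element, Atniss's only legal first move is to delete $r$, after which the position is the forest $\bigsqcup_i P_i$. Hence $f(P)=P$ iff $f\bigl(\bigsqcup_i P_i\bigr)=N$, and by the lemma this holds iff some $P_i$ is an N-position. In words: $P$ is an Eeta win iff at least one subtree $P_i$ is an Atniss win.

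Finally I will match this recursion to the formula by inducting on the subtree rooted at a node $x$, showing that its NAND value $v(x)$ equals $1$ iff the subtree at $x$ is a P-position. The base case is a leaf, whose subtree is a one-element chain: the first player deletes it and wins, an N-position, matching the input $v=0$. For an internal $x$ with children $c_1,\dots,c_k$ we have $v(x)=\mathrm{NAND}\bigl(v(c_1),\dots,v(c_k)\bigr)=1$ iff some $v(c_i)=0$, which by induction says some child subtree is an N-position, which by the recursion above is precisely the condition for the subtree at $x$ to be a P-position. Evaluating at $r$ then shows the root's NAND output is $1$ exactly when $P$ is an Eeta win and $0$ exactly when $P$ is an Atniss win, completing the proof.
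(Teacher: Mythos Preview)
Your proof is correct and follows the same recursion as the paper: delete the root, reduce to a forest, apply a disjoint-union lemma, and match the resulting recurrence to the NAND recurrence by induction on subtrees. The one substantive difference is how you establish the disjoint-union fact (that $\bigsqcup_i Q_i$ is a P-position iff every $Q_i$ is). The paper records this as \Cref{unionwins} and derives it from \Cref{bigthm} by noting that the maximal subposets of a disjoint union are exactly those of the pieces. You instead give a self-contained game-theoretic induction on total size, which is valid and has the pleasant side effect of making \Cref{treenand} logically independent of the harder \Cref{bigthm}. Your remark that \Cref{bigthm} ``gives no further reduction'' is therefore slightly off: the paper does lean on it here, but only through \Cref{unionwins}, and your direct argument shows that detour is unnecessary for rooted trees.
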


Combining \Cref{subtreethm} and \Cref{treenand} gives a full classification of winning players of the Ungar games on graded posets in terms of NAND formulas. 

\begin{theorem}\label{biggestthm}
    Let $P$ be a finite graded poset. Then $P$ is an Eeta win if and only if $\T_P(m)$ evaluates to $1$ when considered as a boolean \textrm{NAND} formula on inputs of all $0$ for all $m$. 
\end{theorem}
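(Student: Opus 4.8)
The plan is to derive \Cref{biggestthm} by chaining the two preceding theorems. \Cref{subtreethm} tells me that $P$ is an Eeta win if and only if its skeleton $\T_P$ is an Eeta win, so the entire problem reduces to analyzing the skeleton. Since the skeleton is a disjoint union of rooted tree posets---one component $\T_P(m)$ for each maximal element $m$ of $P$---I first need a lemma (or observation) about how the Ungar games behave on disjoint unions. The natural claim is that a disjoint union of posets is an Eeta win if and only if \emph{every} component is an Eeta win. Intuitively this is the same phenomenon already visible in \Cref{bigthm}: the second player can win on the whole structure exactly when she can respond componentwise, mirroring the first player's move inside whichever component he plays in, and this parity-matching argument forces every component to individually be an Eeta win.

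Granting that disjoint-union fact, I would then apply \Cref{treenand} to each component. For each maximal $m$, the component $\T_P(m)$ is a rooted tree poset, so by \Cref{treenand} the winner of the Ungar games on $\T_P(m)$ is given by the output of the boolean NAND formula associated to $\T_P(m)$ on the all-zeros input; in particular $\T_P(m)$ is an Eeta win precisely when that NAND formula evaluates to $1$. (Here I am reading off the convention, presumably fixed in Section 2, that output $1$ corresponds to an Eeta win.) Combining the three ingredients gives the chain: $P$ is an Eeta win $\iff$ $\T_P$ is an Eeta win (\Cref{subtreethm}) $\iff$ every component $\T_P(m)$ is an Eeta win (disjoint-union lemma) $\iff$ every $\T_P(m)$ evaluates to $1$ as a NAND formula on the all-zeros input (\Cref{treenand}), which is exactly the statement to be proved.

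The main obstacle---and really the only nonroutine step---is justifying the disjoint-union equivalence, so I would isolate it as a short lemma and prove it by a strategy-stealing/mirroring argument. The key point is that an Ungar move on a disjoint union $P_1 \sqcup P_2$ always removes a set of maximal elements lying entirely within a single component (since elements of different components are incomparable, but a single move's ributes need not respect this)---so I must be careful: a legal move may strip maximal elements from several components at once. I therefore expect the cleanest route is to show that a disjoint union is an Eeta win iff each summand is, by analyzing the game as a disjunctive sum and tracking which player is forced to make the last move; the parity bookkeeping across components is where the argument needs care, and I would lean on the recursive characterization underlying \Cref{bigthm} rather than reprove it from scratch. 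Everything after that lemma is bookkeeping: two biconditionals already in hand, applied termwise over the finitely many maximal elements $m$.
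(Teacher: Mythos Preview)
Your proposal is correct and follows essentially the same route as the paper: apply \Cref{subtreethm}, then a disjoint-union lemma to split $\T_P$ into the components $\T_P(m)$, then \Cref{treenand} on each component. The disjoint-union lemma you single out as ``the main obstacle'' is exactly the paper's \Cref{unionwins}, and as you eventually suggest, it falls out in one line from \Cref{bigthm} (the maximal subposets of a disjoint union are precisely the maximal subposets of the summands), so no mirroring or disjunctive-sum parity analysis is needed.
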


We may apply the above results to elements of $J(\N^d)$ in order to answer one of the open questions in Section 6.1 of \cite{defant2024ungargames}. 

\begin{corollary}\label{ndeezthm}
    A poset $P\in J(\N^d)$ is an Eeta win if and only if for every maximal element $m\in P$, there is a maximal chain in $\T_P(m)$ of even length. 
\end{corollary}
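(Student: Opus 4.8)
The plan is to combine \Cref{biggestthm} with an explicit description of the components $\T_P(m)$ when $P$ is a finite order ideal of $\N^d$. Such an ideal, viewed as an induced subposet of $\N^d$, is graded by the coordinate sum, so \Cref{biggestthm} applies: $P$ is an Eeta win if and only if for every maximal $m$ the tree $\T_P(m)$ evaluates to $1$ as a \textrm{NAND} formula on all-zero inputs. It therefore suffices to compute this \textrm{NAND} value and match it to the even-chain condition.

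First I would pin down the shape of $\T_P(m)$, which is the crux of the argument. The key observation is that for $x\in\M_P(m)$ and a coordinate $i$ one has $x+e_i\in P$ if and only if $x_i<m_i$: the forward implication is immediate since $x_i<m_i$ gives $x+e_i\le m$, and conversely if $x_i=m_i$ but $x+e_i\in P$ then $x+e_i$ lies below some maximal $m'\neq m$, forcing $x<m'$ and contradicting $x\in\M_P(m)$. Since the covers of $x$ in $P$ are exactly the elements $x+e_i\in P$, the number of covers of $x$ equals $|\{i:x_i<m_i\}|$. Hence $x\in\T_P(m)$ (at most one cover) if and only if $x$ agrees with $m$ in all but at most one coordinate. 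This shows $\T_P(m)$ is a spider: the root $m$ together with, for each direction $i$, a chain $m>m-e_i>\cdots>m-t_ie_i$ of some length $t_i\ge 0$, where $t_i$ is the largest $t$ with $m-te_i\in\M_P(m)$. One checks each leg is solid, i.e.\ every intermediate point lies in $\M_P(m)$, since any $m-se_i\le m'$ with $s<t_i$ would force $m-t_ie_i\le m'$.

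Next I would evaluate the \textrm{NAND} formula on this spider. Along a single leg the all-zero value alternates, so the child $m-e_i$ of the root carries value $1$ exactly when $t_i$ is even. The root value is the \textrm{NAND} of these child values, hence equals $1$ if and only if not all legs are even, i.e.\ if and only if some leg has odd length. On the other hand, the maximal chains of the spider are the root-to-leaf paths, one per nonempty leg (or just $\{m\}$ when there are no legs), and the chain along leg $i$ consists of the $t_i+1$ elements $m,m-e_i,\ldots,m-t_ie_i$, which has even length precisely when $t_i$ is odd. Thus $\T_P(m)$ evaluates to $1$ if and only if it contains a maximal chain of even length.

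Finally, feeding this equivalence back into \Cref{biggestthm} yields the corollary: $P$ is an Eeta win if and only if every $\T_P(m)$ evaluates to $1$, if and only if every $\T_P(m)$ has a maximal chain of even length. I would double-check the parity bookkeeping on the degenerate spider (the single vertex $m$, all $t_i=0$), which is an Atniss win whose unique maximal chain $\{m\}$ has odd length, confirming that the even-length convention is the number of elements in the chain and that no off-by-one error has crept in. The only genuinely nontrivial step is the structural identification of $\T_P(m)$ as a spider; the \textrm{NAND} evaluation and the chain count are then routine parity computations.
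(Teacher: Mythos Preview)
Your proof is correct and follows essentially the same line as the paper: show that each $\T_P(m)$ is a spider (the root $m$ with a chain hanging off in each coordinate direction) and then reduce the Eeta condition to a parity check on the leg lengths. The only cosmetic differences are that the paper extracts the spider shape from the more general \Cref{posetprod} and finishes with a direct game argument via \Cref{unionwins}, while you compute $\T_P(m)$ by explicit coordinate analysis and phrase the parity check through the \textrm{NAND} evaluation of \Cref{biggestthm}.
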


Defant, Kravitz, and Williams conjecture that the Ungar games over distributive lattices are PSPACE complete. Such a result on the computational complexity of solving the Ungar games would give a notion of the intrinsic difficulty of classifying the winners of the Ungar games in this setting. From the classification of \Cref{biggestthm}, we are able to refute this conjecture in the graded setting. 

\begin{proposition} 
\label{logspace}
    The Ungar games on graded posets can be solved in logarithmic space. 
\end{proposition}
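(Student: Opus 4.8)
The plan is to combine the classification of \Cref{biggestthm} with the fact that evaluating a Boolean formula is a space-efficient task. Throughout, take the input to be the finite graded poset $P$ presented by its ground set together with its order relation $\le$, so that queries of the form ``is $x\le y$?'' and ``is $x<y$?'' are answered by a table lookup in $O(\log n)$ space, where $n=|P|$. By \Cref{biggestthm}, deciding whether $P$ is an Eeta win amounts to checking, for every maximal element $m$, that the component $\T_P(m)$ evaluates to $1$ as a NAND formula on all-$0$ inputs, and declaring $P$ an Eeta win exactly when all of these evaluate to $1$. I would therefore show that each structural ingredient, and the final formula evaluation, can be carried out in logarithmic space, and then assemble them using closure of $\mathrm{L}$ under composition.

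First I would verify that all the poset-theoretic data needed are logspace-computable from $P$. A vertex $m$ is maximal iff no $y$ satisfies $y>m$, a single scan. A vertex $x$ lies in the skeleton $\T_P$ iff the up-set $G_P(x)=\{y:y>x\}$ is a chain, which can be tested by looping over all pairs $y,z>x$ and checking comparability; this needs only two counters, hence $O(\log n)$ space. For a skeleton vertex $x$ the set $G_P(x)$ is a chain with a unique maximum, so the maximal element $m$ above $x$---and thus the component $\T_P(m)$ containing $x$---is determined and findable in logspace. Finally, the tree structure on $\T_P(m)$ is given by the cover relation: $x$ covers $y$ iff $y<x$ and no $z$ satisfies $y<z<x$, again a logspace predicate. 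In particular, given $m$ and a node $x$, the children of $x$ in the rooted tree poset $\T_P(m)$ can be enumerated in logspace, so the entire NAND formula on $\T_P(m)$ is presented \emph{implicitly} by a logspace machine, with no need to write it down.

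It then remains to evaluate such a NAND formula on all-$0$ inputs in logspace. Each $\T_P(m)$ is a rooted tree poset, so the associated NAND formula is a read-once Boolean formula (a tree), and the task is exactly the Boolean formula value problem. By the classical fact that this problem lies in $\mathrm{NC}^1\subseteq\mathrm{L}$, it can be solved in logarithmic space; the evaluator makes only ``child of $x$'' and ``is $x$ a leaf'' queries, each answered by the logspace routine of the previous paragraph, and by composition of logspace computations the whole evaluation stays in $\mathrm{L}$. To finish, the top-level algorithm loops over all maximal elements $m$, reusing the same $O(\log n)$ workspace for each, runs the evaluator on $\T_P(m)$, and outputs that $P$ is an Eeta win iff every such evaluation returns $1$.

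The main obstacle is the formula-evaluation step: a naive recursive depth-first evaluation of the tree uses stack space proportional to its depth, which can be $\Theta(n)$, so one cannot simply recurse. This is precisely why the argument must invoke the nontrivial fact that the Boolean formula value problem lies in $\mathrm{NC}^1\subseteq\mathrm{L}$ (equivalently, that read-once Boolean trees can be evaluated in logarithmic space), rather than relying on an elementary traversal. A secondary point to watch is the input encoding: the logspace bound uses that $P$ is presented by its order relation, so comparability is a table lookup; if instead $P$ were given only by its Hasse diagram, deciding $x\le y$ becomes reachability in a DAG and the argument would only yield membership in $\mathrm{NL}$.
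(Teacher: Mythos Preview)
Your proposal is correct and follows essentially the same route as the paper: reduce to \Cref{biggestthm}, show that membership in $\T_P(m)$ (and the attendant tree data) can be decided in logspace by looping over pairs of elements and checking comparability, then invoke the classical result that Boolean formula evaluation lies in $\mathrm{L}$ and compose via logspace transducers. Your write-up is more careful than the paper's---you spell out the child/leaf predicates, flag the depth obstacle that forces reliance on the $\mathrm{NC}^1\subseteq\mathrm{L}$ result, and note the dependence on the input encoding---but the underlying argument is the same.
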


By the Space Hierarchy Theorem (see \cite{arora2006computational} for more details), there exist problems in PSPACE that are not in LOGSPACE. So, a problem in LOGSPACE cannot be PSPACE-complete. 

\section{Background}

We begin by introducing some useful definitions relating to partially ordered sets, or posets. A standard, more comprehensive, reference on posets is Chapter 3 of \cite{stanley1997enumerative}. The Ungar games  are played on finite posets; as such, all posets are assumed to be finite unless stated otherwise. Let $(P, \leq)$ be a poset. When the partial order is clear, we perform the standard abuse of notation and use $P$ to refer to $(P, \leq)$. An element $m\in P$ is \textit{maximal} if there is no $x\in P$ such that $x>m$. We denote the set of all maximal elements in $P$ by $\max(P)$. For any $x,y\in P$, if $x\leq y$ and there do not exist any $z\in P$ for which $x<z<y$, we say $y$ \textit{covers} $x$ and denote this by $x\lessdot y$. The set of elements in $P$ covered by $x$ is denoted $\textrm{cov}_P(x)$. Any subset of $P$ is considered a poset with the partial order inherited from $P$. A subset $I\subseteq P$ is an \textit{order ideal} if for any $x\in I$ we have that $y\leq x$ implies $y\in I$. The set of all finite order ideals of $P$ is denoted $J(P)$. The ideal generated by  $x_1, \dots, x_k\in P$ is $\langle x_1, \dots, x_k\rangle = \{y\in P: \exists x_i, y\leq x_i\}$. A subset of $S\subseteq P$ is \textit{convex} if for all $x,y\in S$ and $a\in P$, we have that $x\leq a\leq y$ implies $a\in S$. 

A poset is \textit{graded} if there exists a rank function $\rho:P\to \N$ where $\rho(x) = \rho(y)+1$ if $y\lessdot x$. Any convex subset of a graded poset is graded by the induced rank function. A \textit{chain} of $P$ is a totally ordered subposet of $P$. The \textit{length} of a chain is the number of elements in the subposet. The \textit{Cartesian product} of posets $(A, \leq_A)$ and $(B, \leq_B)$ is the poset on $A\times B$ with $(a, b)\leq (c,d)$ if and only if $a\leq_A c$ and $b\leq_B d$. Young's lattice is the poset $J(\N^2)$; Young's lattice can also be thought of as the poset of Young diagrams ordered under containment. The correspondence between order ideals in $\N^2$ and Young diagrams can be seen by drawing boxes around each point in $\N^2$, as shown in \Cref{fig:youngcorr}. We draw Young Diagrams in the French convention to align with the conventional orientation of $\N^2$. 

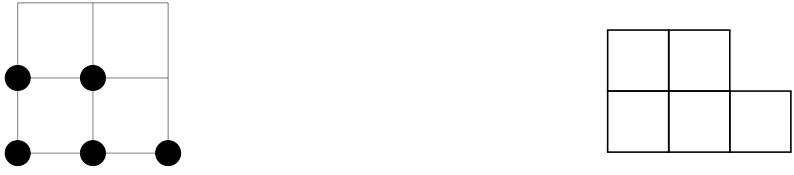
\begin{figure}
\label{fig:youngcorr}
  \centering

  \begin{subfigure}{0.49\textwidth}
    \centering
        \begin{tikzpicture}
    \draw[step=1cm,gray,very thin] (0,0) grid (2,2);
    \node[circle, draw, fill] at (0,0) {};
    \node[circle, draw, fill] at (1,0) {};
    \node[circle, draw, fill] at (0,1) {};
    \node[circle, draw, fill] at (1,1) {};
    \node[circle, draw, fill] at (2,0) {};
\end{tikzpicture}

  \end{subfigure}
  \hfill
  \begin{subfigure}{0.49\textwidth}
    \centering
    \scalebox{1.5}{
    \ydiagram{2, 3}}
    \vspace{5pt}
  \end{subfigure}

  \caption{An example of an ideal in $\N^2$ and the corresponding Young Diagram. }
\end{figure}


In order to state \Cref{treenand}, we introduce some notions from theoretical computer science. Denote the set of all finite binary strings as $\{0,1\}^*$.  The function $\textrm{NAND}:\{0,1\}^*\to\{0,1\}$ returns $0$ if and only if every letter in the input string is a $1$. A \textit{boolean NAND circuit} is a finite directed acyclic graph with a single sink node. On any assignment of $\{0,1\}$ to the source nodes, each target node is assigned a value corresponding to the NAND of the concatenation of the assignments of its source nodes. The output of the circuit on a given input is the value of the sink node. A circuit is called a \textit{formula} if every node has out-degree at most one. \Cref{fig:nandex} shows an example of a boolean NAND formula. See Chapter 6 of \cite{arora2006computational} for a more thorough treatment of circuits and related topics. 

\begin{figure}
    \centering
    \begin{tikzpicture}[every node/.style={font=\sffamily,align=center}, 
  gate/.style={circle, draw},
  input/.style={circle, draw, anchor=east},
  level distance=1.5cm,
  sibling distance=1.2cm]

  \node[input] (I1) at (0,3) {1};
  \node[input] (I2) at (0,2) {0};
  \node[input] (I3) at (0,1) {0};
  \node[input] (I5) at (0,-0.5) {0};
  \node[input] (I6) at (-3,3) {0};
  \node[input] (I7) at (3,-0.5) {1};

  \node[gate] (G1) at (3,2) {1};

  \node[gate] (G3) at (6,0.5) {0};

  \draw[->] (I1.east) -- (G1);
  \draw[->] (I2.east) -- (G1);
  \draw[->] (I3.east) -- (G1);


  \draw[->] (G1.east) -- (G3);
  \draw[->] (I5.east) -- (I7.west);
\draw[->] (I7.east) -- (G3);
\draw[->] (I6) -- (I1);


\end{tikzpicture}
    \caption{An example of boolean NAND formula where all inputs are $0$.}
    \label{fig:nandex}
\end{figure}
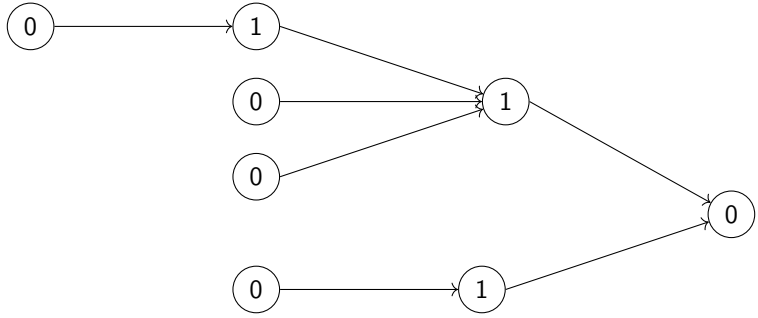

\section{Proof of Main Theorems}

We start with the proof of \Cref{bigthm}, which recursively characterizes the Eeta wins in terms of maximal subposets.

\begin{proof}[Proof of \Cref{bigthm}]
    We proceed by induction on $|P|$. The base case is for $|P| = 1$. This poset is an Atniss win and has one maximal subposet which is an Atniss win. We continue with the inductive step. Let $\rho$ denote a rank function on $P$. 

    For the forward direction, assume that $\M_P(m)$ is an Eeta win for each  $m\in\max(P)$. Let $T$ be a nonempty subset of $\max(P)$. Consider an arbitrary Ungar move sending $P$ to $P\setminus T$. Take some $t\in T$ of minimal rank. Since $\M_P(t)$ is an Eeta win, $P_t := \M_P(t)\setminus \{t\}$ must be an Atniss win. So, by induction, there must be some $c$ covered by $t$ in $\M_P(t)$ for which $\M_{P_t}(c)$ is an Atniss win. Note that $c$ is a maximal element of $P\setminus T$. We claim that $$\M_{P\setminus T}(c)=\M_{P_t}(c).$$ We have an inclusion $\M_{P_t}(c)\subseteq \M_{P\setminus T}(c)$. Note that $\M_{P\setminus T}(c)$ and $\M_{P_t}(c)$ are subsets of the order ideal $\langle c\rangle$. For any $x\in \langle c\rangle\setminus\{c\}$, then by the minimal rank assumption on $t$, for any $m\in \max(P)$, we have $\rho(x)\leq\rho(m)-2$. So, if $x$ is less than some maximal element of $P$ other than $t$, then $x$ is less than a maximal element of $P\setminus T$ other than $c$. Thus, any element $x\in \M_{P\setminus T}(c)$ cannot be less than any element of $\max(P)\setminus\{t\}$, giving the other inclusion.  
    
    Since $\M_{P_t}(c)$ is an Atniss win, so is $\M_{P\setminus T}(c)$. By induction, this means that $P\setminus T$ must be an Atniss win, as it has an Atniss win maximal subposet. Since $P\setminus T$ is an arbitrary Ungar move, $P$ must be an Eeta win. 

    To prove the reverse direction, we show that any poset with an Atniss win maximal subposet has a nontrivial Ungar move to an Eeta win poset. The proof of the converse relies on the following claim. Suppose $m\in\max(P)$ has minimal rank $k$ among the maximal elements where $\M_P(m)$ is an Atniss win. Then all maximal elements of $P\setminus\{m\}$ with rank less than $k$ have Eeta-win maximal subposets. 

    For notational purposes, let $c_1, \dots, c_n$ denote the elements for which $G_P(c_i) = \{m\}$. A maximal element of $P\setminus\{m\}$ of rank less than $k$ is either a maximal element of $P$ or one of the $c_i$. 

    We deal with these two cases separately. First, consider a maximal element $e\in \max(P)$ with rank less than $k$. By assumption, $\M_P(e)$ is an Eeta win. We claim that $\M_{P\setminus\{m\}}(e) = \M_P(e)$. We have an immediate inclusion $\M_P(e) \subseteq \M_{P\setminus\{m\}}(e)$. Now, consider some $x\in \M_{P\setminus\{m\}}(e)$. By definition, $x$ is not less than any maximal element of $P\setminus\{m\}$ other than $e$. So, it suffices to show that $x\not\leq m$. To see this, note that $\rho(x)\leq\rho(m) - 2$. If $x$ were less than $m$, then $x$ would have to be less than some element covered by $m$, and would then not be in $\M_{P\setminus\{m\}}(e)$. 

    For the second case, take some $c_i$ and consider $\M_{P\setminus\{m\}}(c_i)$. By assumption, $\M_P(m)$ is an Atniss win, so $P_m  = \M_P(m)\setminus\{m\}$ is an Eeta win. By the inductive hypothesis, every maximal subposet of $P_m$ is an Eeta win. So, it suffices to show that $\M_{P_m}(c_i) = \M_{P\setminus\{m\}}(c_i)$. There is a natural inclusion $\M_{P_m}(c_i)\subseteq\M_{P\setminus\{m\}}(c_i)$. For the other direction, we note that if $x\in \M_{P_m}(c_i)$, then $x$ is not less than any of the $c_j$, with $j\neq i$. Also, $x$ is not less than any other maximal elements of $P$, as $x\in P_m\subseteq \M_P(m)$. So, $x\in \M_{P\setminus\{m\}}(c_i)$. Thus $\M_{P\setminus\{m\}}(c_i)$ is an Eeta win as desired. 

    Having proved the claim, we can proceed with the reverse direction. Suppose there is some maximal element of $P$ with an Atniss win maximal subposet. We wish to show that $P$ is an Atniss win.  To do this, we construct a non-trivial Ungar move from $P$ to an Eeta win $P\setminus T$, for some $T\subseteq\max(P)$. We form $T$ iteratively as follows. To start, let $T_0 = \varnothing$. Then while $P\setminus T_{k-1}$ has an Atniss win maximal subposet, let $T_k = T_{k-1}\cup \{m_k\}$, where $m_k$ has minimal rank over all elements of $\max(P\setminus T_{k-1})$ for which $\M_{P\setminus T_k}(m_k)$ is an Atniss win. By assumption, $P$ has finitely many elements, so this process must terminate. We take $T$ to be the union of the $T_k$. 
    
    Having constructed $T$, we claim that the move from $P$ to $P\setminus T$ is the a valid Ungar move. By the claim proven between the directions of the proof, the sequence $\rho(m_1), \rho(m_2), \dots$ is non-decreasing. Next, we verify that the elements added to each $T_k$ are in fact maximal elements of $P$. This can be observed by the fact that the rank of each $m_k$ is at least the rank of $m_i$ for $i<k$ and $m_k$ is a maximal element of $P\setminus T_{k-1}$. Since the rank of $m_k$ is at least the rank of all of the elements in $T_{k-1}$, $m_k$ cannot be less than any element of $T_{k-1}$, and thus $m_k$ is a maximal element of $P$. When this process terminates, we have some $T$ such that the maximal subposets of $P\setminus T$ are all Eeta wins. By induction, this implies that $P\setminus T$ is an Eeta win, and $P$ is an Atniss win. 
\end{proof}

Now armed with \Cref{bigthm}, we look towards proving \Cref{subtreethm}. In order to do so, we first need the following facts about Eeta wins on unions and skeletons.

\begin{lemma}
    \label{unionwins}
    Let $P$ be a poset such that $P$ is the disjoint union of subposets $A_1, \dots, A_k$. Then $P$ is an Eeta win if and only $A_i$ is an Eeta win for all $i$. 
\end{lemma}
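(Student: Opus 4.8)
The plan is to argue by strong induction on $|P|$, leaning on one structural feature that makes this lemma genuinely different from a standard disjoint sum of combinatorial games: a single Ungar move on $P=\bigsqcup_i A_i$ may delete maximal elements from several components \emph{at once}. This is precisely why the clean ``every $A_i$ must be an Eeta win'' characterization holds, rather than some parity- or Nim-value condition one might guess from ordinary game sums. First I would record the bookkeeping: since the $A_i$ are mutually incomparable, $\max(P)=\bigsqcup_i \max(A_i)$, so any $T\subseteq\max(P)$ decomposes uniquely as $T=\bigsqcup_i T_i$ with $T_i=T\cap A_i\subseteq\max(A_i)$, and $P\setminus T=\bigsqcup_i(A_i\setminus T_i)$ is again a disjoint union of the same shape with $|P\setminus T|<|P|$. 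A move is nontrivial exactly when some $T_i\neq\varnothing$. The base case $P=\varnothing$ is an Eeta win vacuously.

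For the backward direction, assume every $A_i$ is an Eeta win and take an arbitrary nontrivial move $P\to P\setminus T$. Choosing $j$ with $T_j\neq\varnothing$, the move $A_j\to A_j\setminus T_j$ is a nontrivial move out of an Eeta-win poset, so $A_j\setminus T_j$ is an Atniss win (in particular nonempty). Hence $P\setminus T=\bigsqcup_i(A_i\setminus T_i)$ has at least one Atniss-win component, and by the inductive hypothesis applied to the smaller poset $P\setminus T$ it is an Atniss win. Thus every nontrivial move from $P$ lands on an Atniss win, so $P$ is an Eeta win.

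For the forward direction I would prove the contrapositive: if some $A_j$ is an Atniss win, then $P$ is an Atniss win. Here the crucial idea is to repair all losing components in a single combined move. For each $i$, if $A_i$ is an Atniss win pick $T_i\subseteq\max(A_i)$ with $A_i\setminus T_i$ an Eeta win, and otherwise set $T_i=\varnothing$; then let $T=\bigsqcup_i T_i$, which is nonempty since $T_j\neq\varnothing$. By construction every component $A_i\setminus T_i$ of $P\setminus T$ is an Eeta win, so by the inductive hypothesis (equivalently, the backward direction just established) $P\setminus T$ is an Eeta win. Therefore $P$ has a nontrivial move to an Eeta win and is an Atniss win.

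I expect the only real subtlety, and the conceptual heart of the argument, to be isolating this legal simultaneous multi-component move and recognizing that it is what powers the forward direction; once that is in hand the induction is routine. The remaining care is purely formal: checking that decomposing and recombining moves preserves the disjoint-union structure, and verifying in each direction that the move $T$ is genuinely nonempty so that it is a valid nontrivial Ungar move.
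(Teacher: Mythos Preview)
Your proof is correct, but it takes a genuinely different route from the paper's. The paper's proof is a two-line application of \Cref{bigthm}: since the components are mutually incomparable, the maximal subposets $\M_P(m)$ of $P$ are exactly the maximal subposets of the individual $A_i$, so by \Cref{bigthm} $P$ is an Eeta win iff every $\M_P(m)$ is, iff every $\M_{A_i}(m)$ is, iff (again by \Cref{bigthm}) every $A_i$ is. Your argument instead runs a direct game-theoretic strong induction on $|P|$, explicitly constructing in the contrapositive direction a single combined move that repairs all Atniss-win components simultaneously. What this buys you: your proof is self-contained, does not invoke the substantial \Cref{bigthm}, and in fact nowhere uses that $P$ is graded, so it establishes the lemma for arbitrary finite posets. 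What the paper's approach buys: brevity, and a clean illustration of how \Cref{bigthm} immediately localizes the Ungar game to maximal subposets.
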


\begin{proof}
    The set of maximal subposets of $P$ is exactly the union of maximal subposets of the $A_i$. By \Cref{bigthm}, $P$ is an Eeta win if and only all of the maximal subposets are Eeta wins. So, $P$ is an Eeta win if and only if all of the $A_i$ have maximal subposets that are Eeta wins which is true if and only if all of the $A_i$ are Eeta wins.  
\end{proof}

\begin{lemma}\label{treeunion}
    For a poset $P$ we have $$\T_P = \bigsqcup_{m\in\max(P)}\T_P(m).$$
\end{lemma}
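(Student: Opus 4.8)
The plan is to prove the set equality together with the disjointness of the right-hand side, and to note along the way that there are no order relations between the pieces. The inclusion $\bigsqcup_{m}\T_P(m)\subseteq \T_P$ is immediate, since each $\T_P(m)=\T_P\cap\M_P(m)$ is by definition a subset of $\T_P$. The content is therefore in showing that every skeleton element lands in exactly one component $\T_P(m)$.

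First I would record that the maximal subposets $\M_P(m)$ are pairwise disjoint: if $x\in\M_P(m_1)\cap\M_P(m_2)$ with $m_1\neq m_2$, then $x\leq m_1$ and $x\leq m_2$, so $x$ lies below a maximal element other than $m_1$, contradicting the definition of $\M_P(m_1)$. Hence the sets $\T_P(m)$ are pairwise disjoint as well, so the right-hand side is genuinely a disjoint union of sets.

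The key step is to show that each $x\in\T_P$ lies below a unique maximal element. Since $P$ is finite, $x\leq m$ for at least one $m\in\max(P)$. Suppose $x\leq m_1$ and $x\leq m_2$ for distinct maximal elements $m_1,m_2$. Distinct maximal elements are always incomparable. Moreover $x$ cannot itself be maximal, for then $x\leq m_1$ would force $x=m_1$ and likewise $x=m_2$, contradicting $m_1\neq m_2$; thus $x<m_1$ and $x<m_2$. This exhibits $x$ as strictly below two incomparable elements, contradicting $x\in\T_P$. Therefore $x$ is below exactly one maximal element $m$, which places $x\in\M_P(m)$ and hence $x\in\T_P\cap\M_P(m)=\T_P(m)$. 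Combined with the disjointness above, $x$ lies in exactly one component, giving $\T_P\subseteq\bigsqcup_{m}\T_P(m)$ and completing the set equality.

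Finally, to justify that this is a disjoint union of \emph{posets} (no relations across components), I would observe that if $x\in\M_P(m_1)$ and $y\in\M_P(m_2)$ with $x<y$ and $m_1\neq m_2$, then $x<y\leq m_2$ forces $x\leq m_2$, again contradicting $x\in\M_P(m_1)$. The only real subtlety in the argument, and the step I would be most careful about, is the strict-versus-nonstrict distinction in the uniqueness claim, specifically ruling out the possibility that $x$ is itself maximal; this is handled cleanly by the observation above, and no induction or appeal to gradedness is required.
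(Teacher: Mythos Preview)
Your proof is correct and follows essentially the same approach as the paper: both arguments hinge on the observation that an element of $\T_P$ lies below exactly one maximal element because distinct maximal elements are incomparable and $P$ is finite. You are somewhat more thorough than the paper---you make the disjointness of the $\M_P(m)$ explicit, handle the strict-versus-nonstrict issue carefully, and add the observation that there are no order relations across components---but the core idea is identical.
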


\begin{proof}
     Consider some $p\in \T_P$. Then since any two maximal elements are incomparable, $p$ is less than at most one maximal element. As $P$ is finite, $p$ is less than some maximal element. Thus, $p\in \M_P(m)$ and so $p\in \T_P(m)$ for some $m$. 

     Take some $p\in \T_P(m)$ for some $m\in \max(P)$. Then $p$ cannot be less than any element in $P\setminus\M_P(m)$. By definition, $p$ is not less than any two incomparable elements in $\M_P(m)$. So, $p\in \T_P$.
\end{proof}

\begin{lemma}\label{maximalskel}
    For a poset $P$ and a maximal element $m\in P$ we have $$\T_P(m) = \T_{\M_P(m)}.$$
\end{lemma}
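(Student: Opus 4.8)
The plan is to prove the two set inclusions separately, recalling that by definition $\T_P(m) = \T_P \cap \M_P(m)$, so the statement asserts $\T_P \cap \M_P(m) = \T_{\M_P(m)}$. The crucial observation underlying both directions is that $\M_P(m)$ is upward closed in $P$ relative to its own elements: if $x \in \M_P(m)$ and $y \geq x$ in $P$, then $y \in \M_P(m)$. To see this, note that since $P$ is finite, $y$ lies below some maximal element $m'$; then $x \leq y \leq m'$ forces $m' = m$, because $x$ is below no maximal element other than $m$, so $y \leq m$. Moreover $y$ cannot lie below any other maximal element $m'' \neq m$, for then $x \leq m''$ as well, contradicting $x \in \M_P(m)$. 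Hence $y \in \M_P(m)$.

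For the inclusion $\T_P \cap \M_P(m) \subseteq \T_{\M_P(m)}$, I would take $x \in \T_P \cap \M_P(m)$ and argue contrapositively. Any two elements of $\M_P(m)$ that are incomparable within $\M_P(m)$ are also incomparable in $P$, since $\M_P(m)$ carries the induced order. Thus if $x$ were less than two incomparable elements of $\M_P(m)$, it would be less than two incomparable elements of $P$, contradicting $x \in \T_P$. This direction uses only that the order on $\M_P(m)$ is inherited and requires no further work.

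The reverse inclusion $\T_{\M_P(m)} \subseteq \T_P \cap \M_P(m)$ is where the upward-closure observation does the work, and it is the main (albeit mild) obstacle. Given $x \in \T_{\M_P(m)} \subseteq \M_P(m)$, I must rule out the existence of incomparable $a, b \in P$ with $x < a$ and $x < b$; the difficulty is that a priori $a$ and $b$ need not lie in $\M_P(m)$, so incomparability in $P$ does not immediately transfer into $\M_P(m)$. But by the upward-closure property, the conditions $a, b \geq x$ and $x \in \M_P(m)$ force $a, b \in \M_P(m)$, and they remain incomparable there. This contradicts $x \in \T_{\M_P(m)}$, so no such $a, b$ exist and $x \in \T_P$; since also $x \in \M_P(m)$, we get $x \in \T_P \cap \M_P(m)$. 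Combining the two inclusions yields $\T_P(m) = \T_{\M_P(m)}$.
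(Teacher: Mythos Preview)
Your proof is correct and follows essentially the same approach as the paper: both directions hinge on the fact that $\M_P(m)$ is upward closed in $P$, which the paper phrases as ``$y$ is not less than any element of $P\setminus\M_P(m)$'' and which you prove in more detail. Your argument is in fact more careful than the paper's, which asserts this closure property without justification.
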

\begin{proof}
    Consider some $x\in \T_P(m)$. By definition, $x\in \M_P(m)$ and $x$ is not less than any two incomparable elements in $P$. So, $x$ is not less than any two elements in $\M_P(m)$, and thus $x\in \T_{\M_P(m)}$. 

    Take some $y\in \T_{\M_P(m)}$. Observe that $y$ is not less than any element of $P\setminus \M_P(m)$. So, $y$ cannot be less than any two incomparable elements of $P$, and thus $y\in \T_P(m)$. 
\end{proof}

\begin{lemma}\label{maxelement}
    Let $P$ a poset with a single maximal element $m$, and let $c_1, \dots, c_k\lessdot m$ be the elements covered by $m$. Then $$\T_P = \{m\}\cup\bigsqcup_{i = 1}^k\T_{\M_{P\setminus\{m\}}(c_i)}.$$
\end{lemma}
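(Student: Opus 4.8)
The plan is to reduce the statement to the skeleton of $P\setminus\{m\}$, for which \Cref{treeunion} and \Cref{maximalskel} already supply a disjoint-union decomposition, and then to account separately for the top element $m$. First I would record the two structural facts about $m$ that drive everything: since $m$ is the \emph{unique} maximal element of $P$, every element of $P$ lies weakly below $m$, so $m$ is comparable to every element of $P$; and since $m$ is maximal it is vacuously not below two incomparable elements, so $m\in\T_P$.

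Next I would identify the maximal elements of $Q:=P\setminus\{m\}$. For any $x\neq m$ we have $m\in G_P(x)$, and such an $x$ is maximal in $Q$ exactly when $G_P(x)=\{m\}$, i.e. when $m$ covers $x$; hence $\max(Q)=\{c_1,\dots,c_k\}$. Applying \Cref{treeunion} and then \Cref{maximalskel} to $Q$ gives
$$\T_{P\setminus\{m\}}=\bigsqcup_{i=1}^k\T_{P\setminus\{m\}}(c_i)=\bigsqcup_{i=1}^k\T_{\M_{P\setminus\{m\}}(c_i)},$$
which is precisely the disjoint union on the right-hand side of the claim (with $\M_Q(c_i)=\M_{P\setminus\{m\}}(c_i)$ by definition).

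It then remains to prove $\T_P=\{m\}\cup\T_{P\setminus\{m\}}$, equivalently $\T_P\setminus\{m\}=\T_{P\setminus\{m\}}$. The inclusion $\T_P\setminus\{m\}\subseteq\T_{P\setminus\{m\}}$ is immediate, since deleting an element only shrinks the collection of incomparable pairs and hence can only enlarge the skeleton. For the reverse inclusion, suppose $x\in\T_{P\setminus\{m\}}$ but $x\notin\T_P$, so that $x$ lies below two incomparable elements $a,b\in P$; because $m$ is comparable to everything, neither $a$ nor $b$ can equal $m$, so $a,b\in P\setminus\{m\}$ form an incomparable pair above $x$ in $Q$, contradicting $x\in\T_{P\setminus\{m\}}$. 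Combining this equality with the displayed decomposition of $\T_{P\setminus\{m\}}$, and noting that $m\notin Q$ is disjoint from every $\T_{\M_{P\setminus\{m\}}(c_i)}$, yields the claim.

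The only genuinely delicate point is the reverse inclusion in the last paragraph: one must use that $m$, being the unique maximal element, is comparable to every element and therefore cannot participate in an incomparable pair. Everything else is bookkeeping assembled directly from the two preceding lemmas.
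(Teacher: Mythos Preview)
Your proof is correct. The key observation in both your argument and the paper's is that the unique maximal element $m$ is comparable to every element of $P$, so removing $m$ cannot create or destroy any incomparable pair lying above a given element; this yields $\T_P\setminus\{m\}=\T_{P\setminus\{m\}}$.

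The organization differs slightly from the paper. The paper argues directly: it observes that the $c_i$ are pairwise incomparable, so any $p\neq m$ in $\T_P$ lies below exactly one $c_i$ and hence in $\T_{\M_{P\setminus\{m\}}(c_i)}$, and conversely any element of $\T_{\M_{P\setminus\{m\}}(c_i)}$ lies below nothing outside $\M_{P\setminus\{m\}}(c_i)$ except $m$. You instead factor the lemma as $\T_P=\{m\}\cup\T_{P\setminus\{m\}}$ together with the decomposition of $\T_{P\setminus\{m\}}$ supplied by \Cref{treeunion} and \Cref{maximalskel}. Your route is a bit more modular and reuses the preceding lemmas cleanly; the paper's route is self-contained. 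Amusingly, the proof of \Cref{subtreethm} later extracts exactly your intermediate identity $\T_P=\{m\}\cup\T_{P\setminus\{m\}}$ from this lemma, so your packaging anticipates how the result is actually used.
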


\begin{proof}
    Note that all of the $c_i$ must be incomparable. So, any element $p\neq m$ that is not less than a two incomparable elements must be less than exactly one of the $c_i$. It follows that $p$ is in $\T(\M_{P\setminus\{m\}}(c_i))$ for some $i$. Also note that $m$ must be an element of $\T_P$. Finally, note that any element of $\M_{P\setminus\{m\}}(c_i)$ is not less than any element outside of $\M_{P\setminus\{m\}}(c_i)$ other than $m$, and thus any element in $\T_{\M_{P\setminus\{m\}}}(c_i)$ must be in $\T_P$. 
\end{proof}

We are now prepared to prove \Cref{subtreethm}, which gives the equivalence between posets and their skeletons in the Ungar games.

\begin{proof}[Proof of \Cref{subtreethm}]

    We prove this by induction on the number of elements in $P$. The base case of $|P|=1$ is tautological. By \Cref{bigthm}, it suffices to consider the maximal subposets of $P$. If $P$ has multiple maximal elements, then the maximal subposets have strictly smaller size than $P$. By induction, all of the maximal subposets $\M_P(m)$ are Eeta wins if and only if all of the $\T(\M_P(m))$ are Eeta wins. From  \Cref{unionwins} and \Cref{treeunion}, we know that all of the $\T(\M_P(m))$ are Eeta wins if and only if $\T_P$ is also an Eeta win. So, in the case that $P$ has multiple maximal elements, $P$ is an Eeta win if and only if $\T_P$ is an Eeta win.

    Otherwise, if $P$ has a single maximal element $m$, then there is only one nontrivial Ungar move, taking $P$ to $P\setminus\{m\}$. So, $P$ is an Eeta win if and only if $P\setminus\{m\}$ is an Atniss win. Since $P\setminus\{m\}$ has fewer than $|P|$ elements, we can apply the inductive hypothesis to say that $P\setminus\{m\}$ is an Atniss win if and only if $\T_{P\setminus\{m\}}$ is an Atniss win. By \cref{treeunion}, we have that $$\T_{P\setminus\{m\}} = \bigsqcup_{i=1}^k\T_{P\setminus m}(c_i).$$ So, by \Cref{maxelement}, $$\T_P = \{m\}\cup \T_{P\setminus\{m\}}.$$ Thus, $\T_{P\setminus\{m\}}$ is an Atniss win if and only if $\T_P$ is an Eeta win. Chaining together the equivalences, we see that $P$ is an Eeta win if and only if $\T_P$ is an Eeta win. 
    
\end{proof}

It is useful at this point to recall that the skeleton is graph-theoretically the union of rooted tree posets, as no element is covered by more than one element. Now, we prove \Cref{treenand}, characterizing the wins on tree shaped Hasse diagrams. 

\begin{proof}[Proof of \Cref{treenand}]
    On a rooted tree poset $P$, there is only a single Ungar move, given by removing the maximal element $m$. After this move, the remaining poset is the disjoint union of the subtrees rooted at the elements covered by $m$. By \Cref{unionwins}, $P$ is an Atniss win if and only if all of these subtrees are Eeta wins. Now, label each vertex $v\in P$ as follows: label with a $1$ if the order ideal $\langle v\rangle = \{x\in P:x\leq v\}$ is an Eeta win and $0$ otherwise. All of the leaves of $P$ are labeled $0$. Then under the above assignment, an element is labeled $0$ if and only if all of the elements it covers are labeled $1$. This aligns with the labeling of vertices given by the NAND function, and thus the winning player of the Ungar games on $P$ is given by the evaluation of the NAND formula on the rooted tree $P$. 
\end{proof}

Combining \Cref{subtreethm} and \Cref{treenand}, we can now prove \Cref{biggestthm} which classifies the winner of the Ungar games on a given poset in terms of a related NAND formula. 

\begin{proof}[Proof of \Cref{biggestthm}]
    By \Cref{subtreethm}, $P$ is an Eeta win if and only if $\T_P$ is an Eeta win. \Cref{treeunion} gives that $$\T_P = \bigsqcup_{m\in \max(P)}\T_P(m).$$ Thus, by \Cref{unionwins}, $\T_P$ is an Eeta win if and only if $\T_P(m)$ is an Eeta win for all $m$. Since $\T_P(m)$ is a rooted tree poset for all $m$, we can apply \Cref{treenand}; $\T_P(m)$ is an Eeta win if and only if the NAND formula on $\T_P(m)$ evaluates to $1$ on an input of all $0$. Chaining together the equivalences gives the desired result. 
\end{proof}

\section{Consequences of Main Theorems}

We can now explore the consequences of our main theorem. We start with a strengthening of Theorem~1.7 of \cite{defant2024ungargames}, which considers the Ungar games on quotients of order ideals.

\begin{theorem}
\label{subtractstuff}
    Let $P$ be a finite graded poset. Consider an $I\in J(P)$ such that each element of $I$ is less than two incomparable elements of $P$. Then $P$ is an Eeta win if and only if $P\setminus I$ is an Eeta win. 
\end{theorem}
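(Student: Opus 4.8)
The hypothesis says precisely that no element of $I$ belongs to the skeleton, so $I\cap\T_P=\varnothing$ and in particular $\T_P\subseteq P\setminus I$. Since $P\setminus I$ is the complement of an order ideal, it is an up-set, hence convex, and therefore graded by the induced rank function; so \Cref{subtreethm} applies to both $P$ and $P\setminus I$. My plan is to prove that the two skeletons literally coincide, $\T_P=\T_{P\setminus I}$, and then chain equivalences: $P$ is an Eeta win iff $\T_P$ is, iff $\T_{P\setminus I}$ is, iff $P\setminus I$ is.

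The inclusion $\T_P\subseteq\T_{P\setminus I}$ is the easy half. If $x\in\T_P$, then $x\notin I$ (every element of $I$ is less than two incomparable elements, whereas $x$ is not), so $x\in P\setminus I$; and since $x$ is not below two incomparable elements of $P$, a fortiori it is not below two incomparable elements of the smaller poset $P\setminus I$. The real content is the reverse inclusion, namely that deleting $I$ creates no new skeleton elements. Equivalently, I must show that if $x\in P\setminus I$ is less than two incomparable elements of $P$, then $x$ is already less than two incomparable elements of $P\setminus I$.

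The key step, which I would isolate as a claim proved for \emph{every} $x\in P$ (not only those outside $I$), is: if $x$ is less than two incomparable elements of $P$, then there exist two incomparable elements of $P\setminus I$ both greater than $x$. I would prove this by strong induction on $|G_P(x)|$, the number of elements above $x$. Pick incomparable $y,z>x$ in $P$. If both lie in $P\setminus I$ we are done. Otherwise one of them, say $y$, lies in $I$; then by hypothesis $y$ is itself less than two incomparable elements of $P$, and $G_P(y)\subsetneq G_P(x)$ because every element above $y$ is above $x$ while $y\in G_P(x)\setminus G_P(y)$. Applying the inductive hypothesis to $y$ produces two incomparable elements of $P\setminus I$ above $y$, hence above $x$, and the claim follows. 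Intuitively this recursion ``climbs out'' of $I$: whenever a witnessing pair meets $I$ we move strictly upward, and since a maximal element of $P$ can never lie in $I$, the process must terminate with a pair avoiding $I$; the strong induction on $|G_P(x)|$ makes this rigorous with no separate base case.

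Granting the claim, the reverse inclusion is immediate: for $x\in P\setminus I$ that is less than two incomparable elements of $P$, the claim supplies two incomparable elements of $P\setminus I$ above $x$, so $x\notin\T_{P\setminus I}$. Hence $\T_{P\setminus I}\subseteq\T_P$, giving $\T_P=\T_{P\setminus I}$, and the equivalences through \Cref{subtreethm} finish the argument. The main obstacle is exactly this reverse inclusion; all of the difficulty is packaged into the inductive climbing claim, and the hypothesis that every element of $I$ is less than two incomparable elements of $P$ is precisely what guarantees that each element of $I$ encountered is itself a branching point, keeping the induction alive.
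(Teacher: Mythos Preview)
Your proof is correct and follows the same overall strategy as the paper---show $\T_P=\T_{P\setminus I}$ and then invoke \Cref{subtreethm}---with the forward inclusion handled identically. For the reverse inclusion, however, you are working harder than necessary. The paper avoids your inductive climbing argument entirely by using that $I$ is an order ideal: if $x\in\T_{P\setminus I}$ were below incomparable $a,b\in P$, then since $x$ is not below two incomparable elements of $P\setminus I$, at least one of $a,b$---say $a$---lies in $I$; but then $x\le a\in I$ forces $x\in I$, contradicting $x\in P\setminus I$. So you never need to manufacture two incomparable witnesses inside $P\setminus I$; the presence of a single witness in $I$ already yields a contradiction. Your argument does establish a slightly stronger fact (every branch point of $P$, even one lying in $I$, sits below two incomparable elements of $P\setminus I$), but that extra strength is not used, and the induction can be replaced by a one-line observation.
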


\begin{proof}
     By \Cref{subtreethm}, it suffices to show that $\T_P = \T_{P\setminus I}$. Any element of $\T_P$ is not less than any two incomparable elements, and thus must be in $P\setminus I$. 
    
    In the other direction, consider an element $x\in \T_{P\setminus I}$. We have that $x$ is not less than any two incomparable elements in $P\setminus I$. Assume for the sake of contradiction that $x$ is less than some incomparable $a,b\in P$. By assumption, at least one of $a$ and $b$ is in $I$. Since $I$ is an order ideal and $x\leq a,b$, we can see that $x$ must then be in $I$, which contradicts the assumption that $x\in \T_{P\setminus I}$. 

    Since $\T_P$ and $\T_{P\setminus I}$ are isomorphic posets, \Cref{subtreethm} implies that $P$ is an Eeta win if and only if $P\setminus I$ is an Eeta win. 
\end{proof}

Applying \Cref{subtractstuff} to Young's lattice, we immediately get the following result, strengthening part of Theorem~1.5 of \cite{defant2024ungargames}. 

\begin{corollary}
    Consider $\lambda\in J(\N^2)$. Let $T\subset\lambda$ be the set of cells of $\lambda$ that have sides lying along the northeast boundary of $\lambda$, where $\lambda$ is drawn in the French convention. Then for any $\mu\in J(P)$ where $\mu\subseteq\lambda\setminus T$, the quotient $\lambda\setminus\mu$ is an Eeta win if and only if $\lambda$ is an Eeta win.
\end{corollary}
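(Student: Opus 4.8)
The plan is to recognize this as a direct application of \Cref{subtractstuff} with $P=\lambda$ and $I=\mu$. Viewing $\lambda\in J(\N^2)$ as a subposet of $\N^2$, it is an order ideal and hence convex, so by the grading of $\N^2$ (via $\rho(i,j)=i+j$) it is a finite graded poset, and $\mu$ is an order ideal of it. Since $\lambda\setminus\mu$ is exactly $P\setminus I$, \Cref{subtractstuff} will give $\lambda$ is an Eeta win iff $\lambda\setminus\mu$ is an Eeta win, \emph{provided} we verify the one hypothesis of that theorem: that every cell of $\mu$ is less than two incomparable elements of $\lambda$. The entire content of the proof is therefore to check this hypothesis, which I would do by translating ``below two incomparable cells'' into an explicit local condition on cells and then matching that condition against the definition of $T$.

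The key combinatorial step is the following characterization, which I would state and prove as the heart of the argument: a cell $c=(i,j)\in\lambda$ is less than two incomparable elements of $\lambda$ if and only if both $(i+1,j)$ and $(i,j+1)$ lie in $\lambda$. The ``if'' direction is immediate, since $(i+1,j)$ and $(i,j+1)$ are incomparable in $\N^2$ and each strictly exceeds $c$. For the ``only if'' direction I would use the order-ideal structure: if $c<a$ and $c<b$ with $a=(a_1,a_2)$ and $b=(b_1,b_2)$ incomparable, then (up to swapping) $a_1>b_1\geq i$ and $b_2>a_2\geq j$, so $(i+1,j)\leq a$ and $(i,j+1)\leq b$ componentwise; down-closure of $\lambda$ then forces both $(i+1,j)$ and $(i,j+1)$ into $\lambda$.

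The final step is to identify the complement of this condition with $T$. A cell $(i,j)$ has its top side on the northeast boundary exactly when $(i,j+1)\notin\lambda$, and its right side on the boundary exactly when $(i+1,j)\notin\lambda$; hence $T$ is precisely the set of cells missing at least one of these two neighbors, which by the characterization above is precisely the set of cells that are \emph{not} below two incomparable elements. Consequently $\lambda\setminus T$ is exactly the set of cells that are below two incomparable elements, so the containment $\mu\subseteq\lambda\setminus T$ furnishes the hypothesis of \Cref{subtractstuff} verbatim, and the conclusion follows. I do not expect a genuine obstacle here; the only delicate point is the coordinate bookkeeping in the characterization lemma, where the order-ideal property must be invoked carefully to promote an arbitrary incomparable pair above $c$ to the canonical pair of immediate up- and right-neighbors.
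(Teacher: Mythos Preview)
Your proposal is correct and follows exactly the paper's approach: verify that every cell of $\mu\subseteq\lambda\setminus T$ lies below two incomparable cells (namely its up- and right-neighbors) and then invoke \Cref{subtractstuff}. The paper's proof is simply a terser version of yours, stating in one line that any cell off the northeast boundary is less than the cell above and the cell to the right, whereas you spell out the full ``if and only if'' characterization and the identification of $T$ with its complement.
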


\begin{proof}
    Any element not on the northeast boundary is less than the element above it and the element to the right of it in the Young diagram. Therefore, each element of $\mu$ is less than two incomparable elements of $\lambda$. By  \Cref{subtractstuff}, $\lambda\setminus\mu$ is an Eeta win if and only if $\lambda$ is an Eeta win. 
\end{proof}

In order to prove \Cref{ndeezthm}, we first need a lemma describing the structure of skeletons of order ideals of cartesian products.  

\begin{lemma}
    \label{posetprod}
    Fix graded posets $(A_1, \leq_1), \dots, (A_k, \leq_k)$. For any $P\in J(\prod A_i)$ and $m\in\max(P)$, $\T_P(m)\setminus\{m\}$ is the disjoint union of posets isomorphic to convex subposets of some $A_i$. 
\end{lemma}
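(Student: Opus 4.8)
The plan is to exploit the componentwise order on $\prod A_i$ directly. Fix $m=(m_1,\dots,m_k)\in\max(P)$, and for a coordinate $i$ and an element $t\le_i m_i$ of $A_i$ write $m_{i\mapsto t}$ for the point of $\prod A_j$ that agrees with $m$ in every coordinate except the $i$-th, where it equals $t$; since $m_{i\mapsto t}\le m$ and $P$ is an order ideal, $m_{i\mapsto t}\in P$. The first step is to show that every element of $\T_P(m)\setminus\{m\}$ has this form for some $i$ and some $t<_i m_i$, i.e.\ differs from $m$ in exactly one coordinate. Indeed, if $x\le m$ differs from $m$ in two coordinates $i\ne j$, I would set $a=m_{i\mapsto x_i}$ and $b=m_{j\mapsto x_j}$: both lie in $P$ (they are $\le m$), both strictly dominate $x$, and they are incomparable since $a$ exceeds $b$ in coordinate $j$ while $b$ exceeds $a$ in coordinate $i$. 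Thus $x$ is below two incomparable elements of $P$ and cannot lie in $\T_P$.

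Next I would organize the surviving elements by the coordinate in which they differ from $m$, setting $C_i=\{m_{i\mapsto t}: t<_i m_i,\ m_{i\mapsto t}\in\T_P(m)\}$. Elements of $C_i$ and $C_j$ with $i\ne j$ are incomparable by the same two-coordinate comparison as above, so $\T_P(m)\setminus\{m\}=\bigsqcup_i C_i$ is a disjoint union of posets. Projection to the $i$-th coordinate is an order isomorphism from $C_i$ onto $B_i:=\{t<_i m_i: m_{i\mapsto t}\in\T_P(m)\}\subseteq A_i$, because all comparisons among the points $m_{i\mapsto t}$ are governed entirely by the $i$-th coordinate. The lemma then reduces to showing that each $B_i$ is convex in $A_i$.

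The crux is this convexity, and the cleanest route I see is to reformulate membership in $\T_P(m)$ as a chain condition. I would prove that $m_{i\mapsto t}\in\T_P(m)$ if and only if the set $U_t:=\{y\in P: y> m_{i\mapsto t}\}$ is a chain. The forward implication is immediate from the definition of the skeleton. For the converse, note that any maximal element of $U_t$ is already maximal in $P$ (anything above it is still above $m_{i\mapsto t}$), so if $U_t$ is a chain its top element is the unique maximal element of $P$ above $m_{i\mapsto t}$; since $m$ is one such element and is maximal, that top element is $m$, which forces $m_{i\mapsto t}\in\M_P(m)$ and hence into $\T_P(m)=\T_P\cap\M_P(m)$. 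The essential monotonicity is that $t\le_i t'$ implies $U_{t'}\subseteq U_t$, so being a chain is preserved as $t$ increases. Given $a,b\in B_i$ and $a\le_i z\le_i b$ in $A_i$, I then get $z<_i m_i$ from $z\le_i b$, and $U_z\subseteq U_a$ is a chain from $z\ge_i a$; hence $z\in B_i$, and $B_i$ is convex.

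I expect the convexity step to be the main obstacle: a direct attempt would require simultaneously controlling the $\M_P(m)$ condition and the shape of $P$ above $m_{i\mapsto t}$, which is awkward. The chain reformulation together with the monotonicity $U_{t'}\subseteq U_t$ is what makes both conditions collapse into a single upward-closed, and therefore convex, condition on $t$.
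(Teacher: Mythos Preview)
Your argument is correct and complete. It differs from the paper's in two organizational respects. First, the paper decomposes $\T_P(m)\setminus\{m\}$ into its connected components, observes that the maximum of each component is covered by $m$ and hence differs from $m$ in a single coordinate $i$, and then argues that every element of that component is forced to agree with $m$ off of coordinate $i$; you instead prove the one--coordinate fact directly for an arbitrary element and then group by the varying coordinate into the sets $C_i$, which is a coarser (but perfectly adequate) decomposition. Second, for convexity the paper simply notes that $\T_P(m)$ is upward closed in $P$, hence convex, and that removing the top element and passing to a connected component preserves convexity; you instead recast membership in $\T_P(m)$ as the chain condition on $U_t$ and use the monotonicity $t\le_i t'\Rightarrow U_{t'}\subseteq U_t$. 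The paper's route is a bit shorter, while yours makes the mechanism behind convexity more transparent and avoids any appeal to connected components.
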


\begin{proof}[Proof of \Cref{posetprod}]
    Take some $P\in J(\prod A_i)$. Now, for some $a = (a_1, \dots,a_k)\in\max(P)$ consider $S = \T_P(a)\setminus\{a\}$. We claim that all of the connected components of $S$ are isomorphic to subsets of some $A_i$. To see this, consider a component $T$ of $S$. The maximum element of $T$ is covered by $(a_1, \dots,a_k)$ in $\prod A_i$, and thus must be of the form  $a' = (a_1, \dots, a_i',\dots, a_k)$,  with $a_i'\lessdot_i a_i$ for some $i\in [1,k]$. Then any element $x=(x_1,\dots, x_k)$ that is less than $a'$ must also be less than $x' = (x_1,\dots, a_i, \dots, x_k)$. By construction, we know that $x$ is not less than any two incomparable elements of $P$, so $a'$ must be less than $x'$. Thus, for $j\neq i$, we see that $a_j\leq x_j$ and so $a_j = x_j$. So, all elements in $T$ must have all $j$ components equal to $a_j$, and thus $S$ is isomorphic to some subtree in $A_i$. 

    To see that $T$ is convex, first note that $\T_P(a)$ is upwards closed, and thus convex. Since $a$ is a maximal element, $\T_P(a)\setminus\{a\}$ is convex as well. Since $T$ is a connected component of the Hasse diagram of $\T_P(a)\setminus\{a\}$, $T$ must be convex. 
\end{proof}

Using the above result, we can prove \Cref{ndeezthm}, answering the open question in Section~6.2 of \cite{defant2024ungargames} about the Eeta wins in $J(\N^d)$.

\begin{proof}[Proof of \Cref{ndeezthm}]
    To begin, observe that the only Ungar move on chains, or elements of $J(\N)$, consists of removing the lone maximal element of the chain. So, the Eeta wins in $J(\N)$ are exactly the chains with an even number of elements. Consider some $P\in J(\N^d)$. By \Cref{posetprod}, $$\T_P = \bigsqcup_{m\in\max(P)}\T_P(m),$$ where each $\T_P(m)$ consists of single element $m$ covering a collection of chains. By \Cref{unionwins}, $P$ is an Eeta win if and only if each $\T_P(m)$ is an Eeta win. In any $\T_P(m)$, removing $m$ leaves a disjoint union of chains. This union is an Atniss win if and only if any chain has odd length, implying that $\T_P(m)$ is an Eeta win if any of the original maximal chains has even length. 
\end{proof}

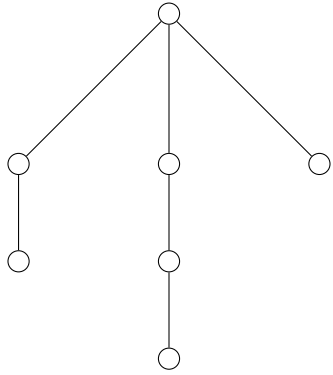
\begin{figure}
    \centering

\begin{tikzpicture}[every node/.style={circle,draw,minimum size=8pt,inner sep=2pt}, node distance=1cm and 2cm]

\node (a1) at (0,2) {};
\node (b1) at (2,2) {};
\node (c1) at (4,2) {};

\node (a2) [below=of a1] {};
\node (b2) [below=of b1] {};
\node (b3) [below=of b2] {};

\node (t) at (2,4) {};

\draw (a1) -- (a2);
\draw (b1) -- (b2) -- (b3);

\draw (a1) -- (t);
\draw (b1) -- (t);
\draw (c1) -- (t);
\end{tikzpicture}

    \caption{An example of a maximal subposet of an element of $J(\N^d)$.}
    \label{fig:ndeez}
\end{figure}



We may use \Cref{treenand} to make improvements in understanding the complexity of computing the winner of Ungar games on graded posets. Previously, the only known general method of determining the winner of the Ungar games on some poset $P$ was to recursively iterate upwards through all order ideals of $P$. This can be quite inefficient, as $|J(P)|$ can be exponential in $|P|$. As a result, Defant, Kravitz, and Williams conjectured that the Ungar games are PSPACE-complete. \Cref{logspace} refutes this in the graded case. 



\begin{proof}[Proof of \Cref{logspace}]
    Take some graded poset $P$. Determining the winner of the Ungar games on $P$ involves computing $\T_P$ and then evaluating the corresponding NAND circuit on $\T_P$. First, we show that in logarithmic space, one can determine whether a given element $x\in P$ is in $\T_P(m)$ for some fixed $m$. We first check whether $x\leq m$. Then we loop over all pairs of elements $a,b\in P$ and check whether $x\leq a, b$ and whether $a$ and $b$ are incomparable. By doing this, we check if $x\in \T_P(m)$. This entire process can be done in logarithmic space, as we only need to store a constant number of variables at a time. 

    Now it is known due to Corollary~4.1 of \cite{logspaceformulae} that a polynomial-sized formula can be evaluated in logarithmic space. Then using log-space transducers (see Lemma~4.15 of \cite{arora2006computational} for a more rigorous treatement), we know that the composition of these two steps can be performed in logarithmic space. 
\end{proof}

\section{Acknowledgments}

This research was conducted at the University of Minnesota Duluth REU with support from Jane Street Capital, NSF Grant 2409861, and donations from Ray Sidney and Eric Wepsic. I am grateful to Joe Gallian and Colin Defant for providing this wonderful opportunity. I would also like to thank Eliot Hodges, Noah Kravitz, Mitchell Lee, and Eric Shen for their helpful feedback and support.

\bibliographystyle{acm}
\bibliography{references}

\end{document}